\pgfplotsset{compat=1.17}
\definecolor{uuuuuu}{rgb}{0.26666666666666666,0.26666666666666666,0.26666666666666666}
\definecolor{ududff}{rgb}{0.30196078431372547,0.30196078431372547,1.}
\definecolor{ffqqqq}{rgb}{1.,0.,0.}
\definecolor{ududff}{rgb}{0.30196078431372547,0.30196078431372547,1.}
\definecolor{ttttff}{rgb}{0.2,0.2,1.}
\theoremstyle{definition}
\newtheorem{df}{Definition} [section]
\theoremstyle{plain}
\newtheorem{thm}[df]{Theorem}
\newtheorem{lemma}[df]{Lemma}
\newtheorem{obs}[df]{Observation}
\newtheorem{conj}[df]{Conjecture}
\title{On the smallest area $(n-1)$-gon containing a convex $n$-gon}
\author{Elliot Hong}
\address{Riverside High School, Leesburg, VA 20176}
\email{einseoul0@gmail.com}
\author{Dan Ismailescu}
\address{Mathematics Department, Hofstra University, Hempstead, NY 11549.}
\email{dan.p.ismailescu@hofstra.edu}
\author{Alex Kwak}
\address{Avon Old Farms School, Avon, CT 06001}
\email{kwakk@avonoldfarms.com}
\author{Grace Yeeun Park}
\address{Palisades Park High School, Palisades Park, NJ 07650}
\email{graceypark04@gmail.com}
\begin{document}

\maketitle




\bigskip

\begin{tcolorbox}
Approximation of convex disks by inscribed and circumscribed polygons is a classical geometric problem whose study is motivated by various applications in robotics and computer aided design.

This article considers the following optimization problem: given integers $3\le n\le m-1$, find the value or an estimate of
\begin{equation*}
r(n,m)=\max_{P\in \mathcal {P}_m}\,\, \min_{Q\in \mathcal {P}_n,\,Q \supseteq P}
\frac{|Q|}{|P|}
\end{equation*}
where $P$ varies in the set $\mathcal {P}_m$ of all convex $m$-gons, and, for a fixed $m$-gon $P$, the minimum is taken over all $n$-gons $Q$ containing $P$; here $|\cdot|$ denotes area.

It has been proved that $r(3,4)=2$, and this is the only exact value currently known.
In this paper we prove that every unit area convex pentagon is contained in a convex quadrilateral of area no greater than $3/\sqrt{5}$, and that every unit area convex hexagon is contained in a convex pentagon of area no greater than $7/6$. Both results are tight as the case of the regular pentagon (hexagon) shows. In other words, $r(4,5)=3/\sqrt{5}$ and $r(5,6)=7/6$. We propose a conjecture regarding the value of $r(n-1,n)$ for $n\ge 6$.
\end{tcolorbox}

\section{\bf Introduction: definitions, notations and background}

A \emph{convex disk} is a compact convex subset of the plane with nonempty interior; let $\mathcal{K}$ be the class of all convex disks.

A \emph{convex $n$-gon} is a convex disk whose interior is the intersection of $n$ half-planes. This definition allows a given $n$-gon to have anywhere between 3 and $n$ vertices. If an $n$-gon has exactly $n$ vertices we will call it a \emph{proper} $n$-gon.
Let $\mathcal{P}_n$ denote the subclass of $\mathcal{K}$ consisting of all convex $n$-gons.

The area of a (measurable) subset $S$ of the plane is denoted by $|S|$.

Given a convex disk $K$ and a positive integer $n \ge 3$, let $C_n(K)$ denote a minimum area $n$-gon which contains $K$.
The existence of $C_n(K)$ follows from Blaschke's selection theorem \cite{blaschke}. We also note that $C_n(K)$ may not be unique; for instance,  if $K$ is a circular disk then there are infinitely many minimum area (regular) $n$-gons containing $K$.
It should be pointed out that $C_n(K)$ may not necessarily be a proper $n$-gon. For example, if $K$ is a triangle, then $C_n(K) = K$ for every $n \ge 3$.

Given  a positive integer $n\ge 3$ and a convex disk $K$, we consider the ratio between the area of $C_n(K)$ and the area of $K$.
\begin{equation*}
r(n, K):= \frac{|C_n(K)|}{|K|}.
\end{equation*}
It is obvious that $r(n,K)\ge 1$ but how large can it be? We will be interested in the following quantity
\begin{equation}\label{R_n}
R_n:=\sup_{K\in \mathcal{K}} \,r(n,K)=\max_{K\in \mathcal{K}}\, \frac{|C_n(K)|}{|K|}.
\end{equation}

The reason the maximum is attained follows from a result of Macbeath \cite{macbeath} who proved that the space of
affine equivalence classes of convex regions is compact.
This motivates the problem of finding the convex regions $K$ that produce the extreme values of $R_n$

It is known that $R_3=2$, the parallelogram being the extremal convex disk. This was already proved by Gross \cite{Gro18} in 1918. Alternative proofs were given by Eggleston \cite{Egg53} and  Chakerian \cite{Cha73}.

Chakerian and Lange \cite{CL71} proved that $R_4\le \sqrt{2}=1.414\ldots$; the equality sign was later removed by Kuperberg in \cite{Kup83}. Kuperberg also conjectured that $R_4=3/\sqrt{5}=1.341\ldots$, with the regular pentagon being the extremal convex disk. Note that there is a sizeable gap between the conjectured value of $R_4$ and the current upper estimate.

Ironically enough, we have better estimates for large values of $n$.

Chakerian \cite{Cha73} proved that
\begin{equation}\label{chak73}
R_n\le \frac{2\pi}{n}\csc\frac{2\pi}{n},\quad {\text {for all}}\,\, n\ge 3.
\end{equation}
This was improved by Ismailescu \cite{Ism09}, who showed that
\begin{equation}\label{isma}
R_n\le \sec\frac{\pi}{n},\quad {\text {for all}}\,\, n\ge 3.
\end{equation}
For large $n$, the best asymptotic estimate is due to L. Fejes T\'{o}th \cite{FT40}:
\begin{equation}\label{lft}
\frac{n}{\pi}\tan\frac{\pi}{n}\le R_n\le \frac{n-2}{\pi}\tan\frac{\pi}{n-2},\quad {\text {for all}}\,\, n\ge 5.
\end{equation}
The lower bound is obtained when $K$ is a circular disk; in this case, it is well known the minimum area $n$-gon containing $K$ is the regular one.

Given the difficulties in finding any other exact values of $R_n$, it would be fruitful to try restrict the class of convex disks under study.

One approach is to consider only \emph{centrally symmetric} convex disks, that is, convex disks for which $K=-K$. Let $\mathcal{K}^{*}$ denote the subset of $\mathcal{K}$ consisting of all centrally symmetric convex disks.
Analogously to \eqref{R_n} define
\begin{equation}\label{Rnsym}
R^{*}_n:=\sup_{K\in \mathcal{K}^*}\, \frac{|C_n(K)|}{|K|}.
\end{equation}
It is immediate that $R^*_n\le R_n$ for all $n\ge 3$ and consequently, $R^*_3=R_n=2$ with the parallelogram being the extremal disk.
Petty \cite{Pet55} proved that $R^*_4=4/3$; the extremal domain is the regular hexagon. A different proof was given by Pelczinski and Szarek \cite{PS91}.

We remark that the problem of finding the exact value of $R^*_6$ is equivalent to a question raised by Reinhardt \cite{reinhardt} regarding the determination of the centrally symmetric convex disk with lowest packing density. It is known that
\begin{equation*}
R^*_6\ge \frac{2\sqrt{2}-1}{8-4\sqrt{2}-\ln{2}}=1.1081\dots.
\end{equation*}
A long standing conjecture of Reinhardt is that equality holds, and the extremal domain is the \emph{smoothed octagon} - a regular octagon with corners removed along arcs of hyperbolae.
Some recent work of Hales \cite{hales} points towards the veracity of this conjecture.

To the best of our knowledge, there are no additional results involving $R^*_n$.

Rather than limiting the discussion to the class of centrally symmetric convex disks, we may restrict ourselves to considering the class of all convex $m$-gons. We present the details in the next section.

\section{\bf Minimum area polygons containing a given polygon}

For a given convex $m$-gon $P$, and a integer $n$ with $3\le n\le m-1$, let $C_n(P)$ be a minimal area $n$-gon which contains $P$.
The problem of determining $C_n(P)$ is motivated by applications in robotics \cite{chazelle} and computer aided design \cite{doribenbessat}.

Various algorithms have been devised for computing $C_n(P)$, see e. g. \cite{aggarwalchangyap} and the references therein. The most efficient algorithm currently known is due to Aggarwal, Chang and Yap and runs in $O(m \log{m} \log{n})$ time.

Most of the polygon circumscribing algorithms rely on the following result of DePano \cite{depano} (see also \cite{zalgaller}).
\begin{lemma}\label{depano}
Given a convex $m$-gon $P$ and a positive integer $n$ with $3\le n\le m-1$, there always exists a convex $n$-gon $C_n(P)$ of minimal area such that at least $n-1$ of sides of $C_n(P)$ contain sides of $P$. Furthermore, the midpoint of each of the sides of $C_n(P)$ belongs to $P$.
\end{lemma}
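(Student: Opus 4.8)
The plan is to begin with a minimal-area $n$-gon $Q\supseteq P$, whose existence is guaranteed by Blaschke's selection theorem together with the Macbeath compactness result quoted above, and then to read off the two asserted properties from first-order optimality. I would record each side of $Q$ by the outward normal direction of the line carrying it; since every such line supports $P$, its position is pinned by the support function of $P$, so $Q$ is determined by its $n$ normal directions $\theta_1<\dots<\theta_n$ and the area becomes a piecewise-smooth function $A(\theta_1,\dots,\theta_n)$. The first reduction is that \emph{every side of $Q$ meets $P$}: if some side misses $P$, translate its line inward, parallel to itself, until it first touches $P$; this keeps $P\subseteq Q$ and strictly decreases the area, contradicting minimality. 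Hence each side lies on a supporting line of $P$, touching it either along an edge of $P$ (a \emph{flush} side) or at a single vertex of $P$ (a \emph{non-flush} side).

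\textbf{The midpoint condition.} For the ``furthermore'' clause I would use the elementary fact that rotating a side about a point $O$ lying on it by an infinitesimal angle $d\theta$ changes $|Q|$ by $\tfrac12(r_+^2-r_-^2)\,d\theta$, where $r_+,r_-$ are the distances from $O$ to the two endpoints of the side; this is just the difference of the two thin triangles swept out at the two ends, and to first order it is insensitive to the slopes of the neighbouring sides. Take $O$ to be a contact point of the side with $P$. If the side is non-flush, $O$ is interior to the side and the rotation is admissible in both directions for small angles (the normal cone at the contact vertex of $P$ is a genuine arc), so minimality forces $r_+^2-r_-^2=0$; that is, the contact point is the midpoint, which therefore lies in $P$. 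If the side is flush, the contact set is a whole edge $[c_1,c_2]\subseteq P$ and rotation is admissible one-sidedly about each of $c_1$ and $c_2$; the two resulting inequalities sandwich the midpoint between $c_1$ and $c_2$, so again the midpoint lies on $[c_1,c_2]\subseteq P$. This settles the midpoint statement for every side at once.

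\textbf{The flush count, and the main obstacle.} The delicate part is showing that one may \emph{choose} a minimizer with at least $n-1$ flush sides, i.e.\ with at most one non-flush side. The first-order conditions above do not settle this: a non-flush side sits at an interior critical direction $\theta_i$ of $A$, and a second-order computation shows this can be a genuine strict local minimum (for instance when the two neighbours of the side meet on the far side of it), so a lone non-flush side cannot in general be eliminated by rotating it alone toward a flush position without increasing the area. My plan is therefore to work with \emph{two} non-flush sides simultaneously: freezing all flush sides, I would move the pair of free directions $(\theta_i,\theta_j)$ along a curve on which $A$ is constant, continuing until some side turns flush or a vertex of $Q$ degenerates, thereby producing a minimizer of the same area with strictly more flush sides, and then induct on the number of non-flush sides. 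The crux—the step I expect to be hardest—is that when the two free sides are non-adjacent the area function \emph{separates} as $A(\theta_i,\theta_j)=\mathrm{const}-G_i(\theta_i)-G_j(\theta_j)$, so at a strict joint minimum no constant-area curve exists locally and the naive sliding stalls; one must instead exploit global structure (comparing across combinatorial types, or using the flat directions that appear exactly in the degenerate neighbour configurations, as in the parallel-neighbour case) to manufacture some genuinely minimal $n$-gon with the claimed flushness. It is precisely here that the existential phrasing of the lemma is indispensable: not every minimizer need have $n-1$ flush sides, only some one, and the real work lies in producing that particular minimizer.
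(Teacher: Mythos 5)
The paper does not actually prove this lemma; it is imported verbatim from DePano \cite{depano} (see also Zalgaller \cite{zalgaller}), so the comparison here is with the cited literature rather than with an in-paper argument. Your first two steps are sound and match the standard first-variation proof: every side of a minimizer must support $P$ (otherwise an inward parallel translation strictly decreases the area), and the rotation formula $\tfrac12\left(r_+^2-r_-^2\right)d\theta$, applied two-sidedly at a single contact vertex and one-sidedly about the endpoints $c_1,c_2$ of a contact edge, correctly forces the midpoint of every side of the minimizer to lie in $P$. That settles the ``furthermore'' clause.

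However, the main assertion --- that \emph{some} minimizer has at least $n-1$ flush sides --- is not established, and your own analysis shows why the proposed route cannot close it. A balanced non-flush side whose neighbours meet beyond it is a strict local minimum under rotation of that side alone (your second-order computation, with $A''=r^2(\cot\beta+\cot\gamma)>0$ when $\beta+\gamma<\pi$), so single-side perturbations are locked; and for two \emph{non-adjacent} non-flush sides the area separates as $A(\theta_i,\theta_j)=\mathrm{const}-G_i(\theta_i)-G_j(\theta_j)$, so at a strict joint minimum there is no constant-area curve along which to slide, and your induction on the number of non-flush sides cannot even begin in exactly the case that matters. The concluding appeal to ``global structure'' and ``comparing across combinatorial types'' is a placeholder, not an argument; yet this selection of a special minimizer --- possibly of a different contact pattern between the $n$ supporting lines and the vertices and edges of $P$ --- is precisely the content of DePano's lemma and the part that requires real work (Zalgaller's note \cite{zalgaller} exists because earlier claims at this step were flawed). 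As it stands, the proposal proves only the midpoint property and the fact that every side of every minimizer supports $P$; the flush-side count of $n-1$, which is what the circumscribing algorithms and this paper actually use, remains unproven.
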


Similarly to the quantity $R_n$ introduced in \eqref{R_n}, for every $n, m$ satisfying $3\le n\le m-1$ we define
\begin{equation}\label{rnm}
r(n,m):=\sup_{P\in \mathcal{P}_m} \, \frac{|C_n(P)|}{|P|}.
\end{equation}

It follows from definitions \eqref{R_n} and \eqref{rnm} that $r(n,m)\le R_n$, for all $n, m$ with $3\le n<m$. Moreover, since every convex disk can be approximated arbitrarily close by an appropriately chosen polygon, it follows that
\begin{equation}\label{eq}
R_n=\sup_{m>n} r(n,m), \quad \text{for all}\,\, n\ge 3.
\end{equation}

As computing exact values of $R_n$ is a difficult problem, one may expect that the question of finding exact answers for $r(n,m)$ is equally challenging. While this is indeed the case in general, we address the simpler case when $m=n-1$.

Using Lemma \ref{depano} it can be easily checked that the smallest quadrilateral containing a unit area regular pentagon has area $3/\sqrt{5}$ --- see figure \ref{fig1}(a).

\begin{figure}[ht]
\centering
\includegraphics[width=1\linewidth]{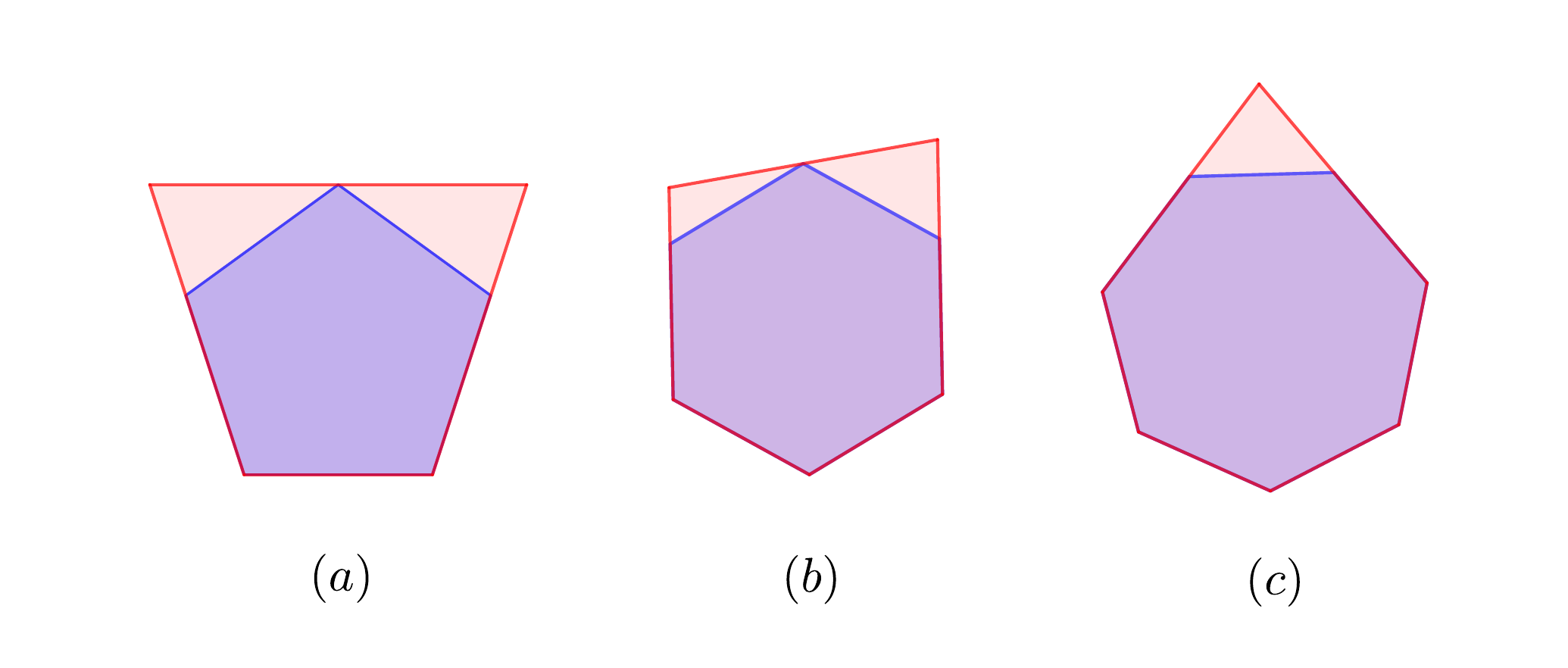}
\vspace{-1cm}
\caption{\small{Minimal area $(n-1)$-gons containing regular $n$-gons}}
\label{fig1}
\end{figure}

Similarly, the smallest pentagon containing a unit area regular hexagon has area $7/6$ as shown in figure \ref{fig1}(b). Note that in this case there are infinitely many optimal circumscribing pentagons. Finally, a unit area regular heptagon is contained in a hexagon of area $1+\tan(2\pi/7)\sec(\pi/7)/7=1.1725\dots$ as shown in figure \ref{fig1}(c).

It is natural to expect that out of all unit area convex $n$-gons, the (affine) regular one requires the largest minimal area circumscribed $(n-1)$-gon.
Thus, we propose the following
\begin{conj}
\begin{equation}
r(4,5)= \frac{3}{\sqrt{5}} \quad \text{and}  \quad r(n-1, n)= 1+\frac{\tan(2\pi/n)}{n\cos(\pi/n)}\quad \text{for all} \,\,  n\ge 6.
\end{equation}
\end{conj}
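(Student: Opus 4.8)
The plan is to use Lemma~\ref{depano} to convert the problem into a finite-dimensional optimization over the affine shape of $P$, to solve that optimization exactly for $n=5$ and $n=6$, and to isolate the extremal-shape step as the obstruction that keeps the general statement at the level of a conjecture.

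\emph{Reduction.} Fix a convex $n$-gon $P$ and let $Q=C_{n-1}(P)$. By Lemma~\ref{depano}, $Q$ has $n-1$ sides, at least $(n-1)-1=n-2$ of which lie on side-lines of $P$. Hence only two combinatorial types occur. Either \emph{all} $n-1$ sides of $Q$ lie on side-lines of $P$ --- equivalently one side of $P$ is deleted and its two neighbours are extended until they meet at a new apex $W$ (``side elimination''), adding the exterior triangle beyond that side --- or exactly $n-2$ sides are retained, so that one vertex $V$ together with its two incident sides is removed and replaced by a single free chord $f$ (``vertex truncation''). In the truncation case the supporting line carrying $f$ touches $P$ only at $V$ (otherwise a further side of $P$ would lie on that line, returning us to the first type), so the midpoint condition of Lemma~\ref{depano} forces $V$ to be the midpoint of $f$; this determines $f$ uniquely as the chord through $V$ bisected by the two retained neighbouring side-lines. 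Consequently
\begin{equation*}
|C_{n-1}(P)| = |P| + \min\Big(\min_{j}\tau_j(P),\ \min_{i}\sigma_i(P)\Big),
\end{equation*}
where the $\tau_j$ are the elimination surpluses and the $\sigma_i$ the truncation surpluses. Each surplus scales with area, so $P\mapsto |C_{n-1}(P)|/|P|$ is affine-invariant, and $r(n-1,n)$ is the maximum of this functional over the compact space of affine classes of convex $n$-gons (Macbeath).

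\emph{Base cases.} For $n=5$ and $n=6$ I would write the $\tau_j$ and $\sigma_i$ explicitly as trigonometric/rational functions of the vertex data and minimise. Evaluated at the affine-regular polygon (Figure~\ref{fig1}), vertex truncation of the regular pentagon yields total area $3/\sqrt{5}$, while for the regular hexagon truncation and elimination coincide and both yield $7/6$; this coincidence is precisely the source of the ``infinitely many optimal pentagons'' remark, since a one-parameter family of chords then realises the minimum. The real content of the two base cases is the matching upper bound: for every unit-area convex pentagon one must show $\min(\min_j\tau_j,\min_i\sigma_i)\le 3/\sqrt5-1$, and for every unit-area convex hexagon the bound $1/6$, with equality only in the affine-regular case.

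\emph{Extremal shape (the main obstacle).} The difficulty is concentrated in proving that the affine-regular $n$-gon is the maximiser. I would argue variationally: take a maximiser $P^{*}$ (it exists by compactness) and show that unless all the \emph{active} surpluses are equal there is an area-preserving perturbation of the vertices that strictly raises the smallest surplus, contradicting maximality; equalising the active surpluses then forces affine-regularity. The hard part is that the objective is a minimum over two competing families of constraints, so one must track which truncations and eliminations are binding and produce a single perturbation that increases all of them at once. For $n=5,6$ the small number of shape parameters makes this tractable (and in fact permits a direct estimate bypassing the variational argument), but for $n\ge 7$ this balancing argument is exactly what is missing, which is why the value for those $n$ remains conjectural. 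For the regular $n$-gon alone the two candidate surpluses can be evaluated in closed form; for $n\ge 6$ the elimination surplus is no larger than the truncation surplus (equal at $n=6$, strictly smaller for $n\ge 7$), which produces the value displayed in the Conjecture, whereas for $n=5$ truncation wins and gives the separate value $3/\sqrt{5}$.
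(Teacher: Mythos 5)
This statement is a conjecture in the paper: the authors prove only its first two instances, $r(4,5)=3/\sqrt{5}$ (Theorem \ref{thmpentagon}) and $r(5,6)=7/6$ (Theorem \ref{thmhexagon}), and explicitly leave $n\ge 7$ open. Measured against that, your proposal has a genuine gap: it never actually proves anything beyond the easy lower bounds. The combinatorial reduction via Lemma \ref{depano} (two types: side elimination with a new apex, or vertex truncation by a chord bisected at the removed vertex) is a reasonable and essentially correct skeleton --- though you should rule out the case where the two side-lines of $P$ not used by $Q$ are non-adjacent, which you gloss over; the midpoint condition does exclude it, but that needs an argument. The real content of the statement, however, is the upper bound: that for \emph{every} unit-area convex pentagon the best surplus is at most $3/\sqrt{5}-1$, and for every hexagon at most $1/6$. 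At exactly this point your proposal says ``I would write the $\tau_j$ and $\sigma_i$ explicitly \ldots and minimise'' and, for the extremal-shape step, sketches a variational balancing argument over active constraints that you yourself concede is not carried out. A minimum of two competing families of surpluses over the moduli space of affine classes is precisely the hard object here; asserting that a maximiser must equalise active surpluses and that equalisation forces affine-regularity is not established by the sketch, and no perturbation is exhibited. So as a proof the proposal establishes only the values at the regular polygons, i.e.\ the inequalities $r(4,5)\ge 3/\sqrt5$ and $r(n-1,n)\ge 1+\tan(2\pi/n)/(n\cos(\pi/n))$, which the paper already gets from Figure \ref{fig1}.

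It is also worth noting how differently the paper handles the two cases it does settle, because the contrast explains why your route stalls. The authors never compute the true minimal circumscribing polygon $C_{n-1}(P)$ at all: they normalise by affine invariance (least ear for the pentagon, smallest parameter $a$ for the hexagon), construct one explicit candidate --- the quadrilateral $BCFG$, resp.\ pentagon $BCDGH$, obtained by drawing through a distinguished vertex the parallel to a diagonal --- and then prove a polynomial inequality in four, resp.\ six, parameters. For the pentagon this is done by exhibiting an explicit certificate, writing $g(a,b,c,d)$ as a nonnegative combination of products of the constraint quantities $c,d,e,f,1-c,1-d,(a-\phi)^2,(b-\phi)^2$ (found with MAPLE); for the hexagon by monotonicity in $c$ and $e$ followed by a manifestly nonnegative polynomial expansion in $x=b-a$, $y=f-a$. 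This sidesteps entirely the need to identify which of your $\tau_j,\sigma_i$ is binding, which is the step your plan cannot complete and which the authors themselves flag, in their concluding remarks, as the obstruction for $n\ge 7$.
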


The main purpose of this paper is to show that $r(4,5)=3/\sqrt{5}$ and $r(5,6)=7/6$, thus confirming the first two cases of the above conjecture.

\subsection{\bf The Main Technique}

In the sequel we will use the \emph{outer product} of two vectors to express areas of various polygons. This
operation, also known as \emph{exterior product}, is defined as follows:

For any two vectors $\mathbf{v}=(a,
\, b)$ and $\mathbf{u}=(c,\,d)$, the outer product of $\mathbf{v}$ and $\mathbf{u}$ be given by
$$\mathbf{v}\wedge \mathbf{u}:=\frac{1}{2}\cdot (ad-bc).$$

It is easy to see that the outer product represents the \emph{signed
area} of the triangle determined by the vectors $\mathbf{v}$ and
$\mathbf{u}$, where the $\pm$ sign depends on whether the angle
between $\mathbf{v}$ and $\mathbf{u}$ - measured in the
counterclockwise direction from $\mathbf{v}$ towards $\mathbf{u}$ -
is smaller than or greater than $180^{\circ}$.

The following properties of the outer product are simple
consequences of the definition and are going to be used extensively
in the remaining part of the paper.
\begin{eqnarray*}
1.&& \mathbf{v}\wedge \mathbf{u} =- \mathbf{u}\wedge\mathbf{v},\quad \text{
anticommutativity. In particular,} \quad \mathbf{v}\wedge\mathbf{v}=0.\\
2.&& (\alpha \mathbf{v}+\beta \mathbf{u})\wedge \mathbf{w}=\alpha (\mathbf{v}\wedge\mathbf{w})
+\beta(\mathbf{u}\wedge\mathbf{w}), \quad \text{linearity}.
\end{eqnarray*}

\section{\bf A small area quadrilateral containing a pentagon}
As mentioned earlier, the unit area regular pentagon requires a circumscribed quadrilateral of area at least $3/\sqrt{5}$. It follows that $r(4,5)\ge 3/\sqrt{5}$. We prove the opposite inequality in the theorem below.

\begin{thm}\label{thmpentagon}
Every convex pentagon $ABCDE$ is contained in a quadrilateral $BCFG$ such that $|BCFG| \le 3/\sqrt{5}\cdot |ABCDE|$.
\end{thm}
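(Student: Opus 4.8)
The plan is to combine affine invariance with Lemma~\ref{depano}. Because an affine map scales the area of the pentagon and of any circumscribing quadrilateral by the same factor, the ratio $|BCFG|/|ABCDE|$ is affine invariant, and I am also free to choose which side of the pentagon plays the role of $BC$. Fixing that choice, I keep the side $BC$ and collapse the opposite part of the boundary: extend the neighbouring side $BA$ beyond $A$ to a point $G$ on the line $AB$, extend $CD$ beyond $D$ to a point $F$ on the line $CD$, and close the quadrilateral with a segment $FG$ through the remaining vertex $E$. This produces a convex quadrilateral $BCFG\supseteq ABCDE$, and a direct dissection gives
\[ |BCFG| = |ABCDE| + |DEF| + |EAG|, \]
so that, for this retained side, minimizing the circumscribed quadrilateral is the same as minimizing $|DEF|+|EAG|$ over the direction of the free line through $E$.

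To pin down that free line I introduce the apex $T=\mathrm{line}(AB)\cap\mathrm{line}(CD)$. Since $B$ lies on $TG$ and $C$ on $TF$, one has $|BCFG| = |TFG| - |TBC|$ with $|TBC|$ fixed, so it suffices to minimize the area of the triangle $TFG$ cut from the apex by a variable line through the fixed interior point $E$. This is exactly the situation governed by the midpoint clause of Lemma~\ref{depano}: writing the constraint in intercept form $p/f+q/g=1$ and applying AM--GM shows that $|TFG|$ is smallest precisely when $E$ is the midpoint of $FG$. Imposing this, I would place $E$ at the origin so the midpoint condition becomes $G=-F$; then the extra area collapses to a single outer product, $|DEF|+|EAG| = |(D-A)\wedge F|$ (the orientation bookkeeping being routine for a convex pentagon), where $F$ is the unique point with $F\in\mathrm{line}(CD)$ and $-F\in\mathrm{line}(AB)$. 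Solving this $2\times 2$ linear system for $F$ expresses the optimal $|BCFG|$, and hence the ratio $|BCFG|/|ABCDE|$, as an explicit rational function of the coordinates of $C$ and $D$ after the standard affine normalization (say $A\mapsto(0,0)$, $B\mapsto(1,0)$, $E\mapsto(0,1)$).

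The decisive step is the choice of retained side. A fixed side cannot be bounded on its own: a pentagon obtained from a quadrilateral by splitting one side into two nearly collinear edges has, for the short retained side, a quadrilateral of area tending to twice that of the pentagon, so no single side, and in particular no averaging argument ($\min\le\mathrm{average}$), can deliver the constant $3/\sqrt5$. What must be shown is that for every convex pentagon at least one of the five midpoint-optimal quadrilaterals has area at most $(3/\sqrt5)\,|ABCDE|$, with equality forced only by the affine-regular pentagon. Concretely, I would assume all five ratios exceed $3/\sqrt5$ and derive a contradiction, or equivalently study the minimum over the five retained sides on the four-parameter affine shape space and show that its supremum is $3/\sqrt5$, attained only at the regular pentagon.

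The main obstacle is precisely this min-over-five optimization on a four-dimensional shape space, which resists the naive symmetrization used in averaging proofs. I expect the argument to require eliminating variables using the midpoint/AM--GM structure and then a careful extremality analysis to force the balanced (regular) configuration, together with separate treatment of the boundary of the parameter space: degenerate pentagons, the case where a chosen pair of sides is parallel and the apex $T$ runs off to infinity (so $BCFG$ is a trapezoid), and the verification that the constructed $F$ and $G$ really land beyond $D$ and $A$, so that $BCFG$ is genuinely convex and contains the pentagon. These feasibility and boundary checks, rather than the individual area computations, are where the proof will demand the most care.
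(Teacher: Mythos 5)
Your construction of the circumscribing quadrilateral is essentially the paper's: retain one side ($BC$), extend the two neighbouring sides to points $F$ and $G$ on a line through the opposite vertex $E$, and note $|BCFG|=|ABCDE|+|DEF|+|EAG|$. Your diagnosis is also accurate that the whole difficulty lies in choosing which side to retain and then bounding the resulting four-parameter optimization. But the proof stops precisely at that point: ``assume all five ratios exceed $3/\sqrt{5}$ and derive a contradiction, or \dots\ show that its supremum is $3/\sqrt{5}$'' is a statement of intent, not an argument. Because equality is attained at the affine regular pentagon, the target inequality is tight, so no soft compactness or contradiction argument can close the gap; an explicit certificate of nonnegativity (or equivalently a genuine extremality analysis on the shape space) is required, and your proposal supplies neither. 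This is the entire mathematical content of the theorem, and it is missing.

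For comparison, the paper resolves both of your open issues concretely. First, the retained side is selected by relabeling so that the ear $DEA$ has minimum area among the five ears; in the outer-product parameterization $\mathbf{u}=\overrightarrow{OD}$, $\mathbf{v}=\overrightarrow{OA}$ (with $O=AC\cap BD$) this yields the clean constraints $c\le 1$, $d\le 1$, $e\ge 0$, $f\ge 0$ of \eqref{cdef} --- a single distinguished side, avoiding your five-fold minimum entirely. Second, the closing line through $E$ is taken \emph{parallel to the diagonal $AD$}, not the midpoint-optimal line from Lemma \ref{depano}; since only an upper bound on $|DEF|+|EAG|$ is needed, optimality of the line is irrelevant, and at the affine regular pentagon the parallel line coincides with the optimal one by symmetry, so no slack is lost at the extremal case. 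Everything then reduces to the polynomial inequality $g(a,b,c,d)\ge 0$ of \eqref{fnonneg}, which the paper proves by exhibiting (with computer assistance) an explicit representation of $g$ as a nonnegative combination of products of $a$, $b$, $c$, $d$, $e$, $f$, $1-c$, $1-d$, $(a-\phi)^2$, $(b-\phi)^2$, with equality forcing $a=b=\phi$, $c=d=1$, i.e.\ the affine regular pentagon of Observation \ref{extremum}. Your midpoint/AM--GM refinement is correct as far as it goes but buys nothing here, while the decisive positivity certificate --- the step your plan defers --- is the proof.
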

\begin{proof}
Let $ABCDE$ be an arbitrary convex pentagon.
After an eventual relabeling of the vertices we may assume that
\begin{equation}\label{ABC}
|DEA|=\min\{|ABC|,\, |BCD|,\, |CDE|,\,
|DEA|,\, |EAB|\}
\end{equation}
In the literature, the triangles formed by three consecutive
vertices of a convex polygon are sometimes called \emph{ears}.
Assumption (\ref {ABC}) above fixes the ear of least area. Denote
the intersection of $AC$ and $BD$ by $O$, and define $\mathbf{u}
:= \overrightarrow{OD}$, $\mathbf{v} := \overrightarrow{OA}$.

After an appropriate scaling, we may assume that $\mathbf{u}\wedge
\mathbf{v} = |AOD| = 1$. Since $A$, $O$, and $C$ are
collinear and $D$, $O$, and $B$ are collinear, we can write
$\overrightarrow{BO} = a \cdot \overrightarrow{OD} = a \mathbf{u}$
and $\overrightarrow{CO} = b \cdot \overrightarrow{OA} = b
\mathbf{v}$, for some scalars $a,\ b > 0$ --- see figure \ref{fig2}.

\begin{figure}[ht]
\centering
\includegraphics[width=0.9\linewidth]{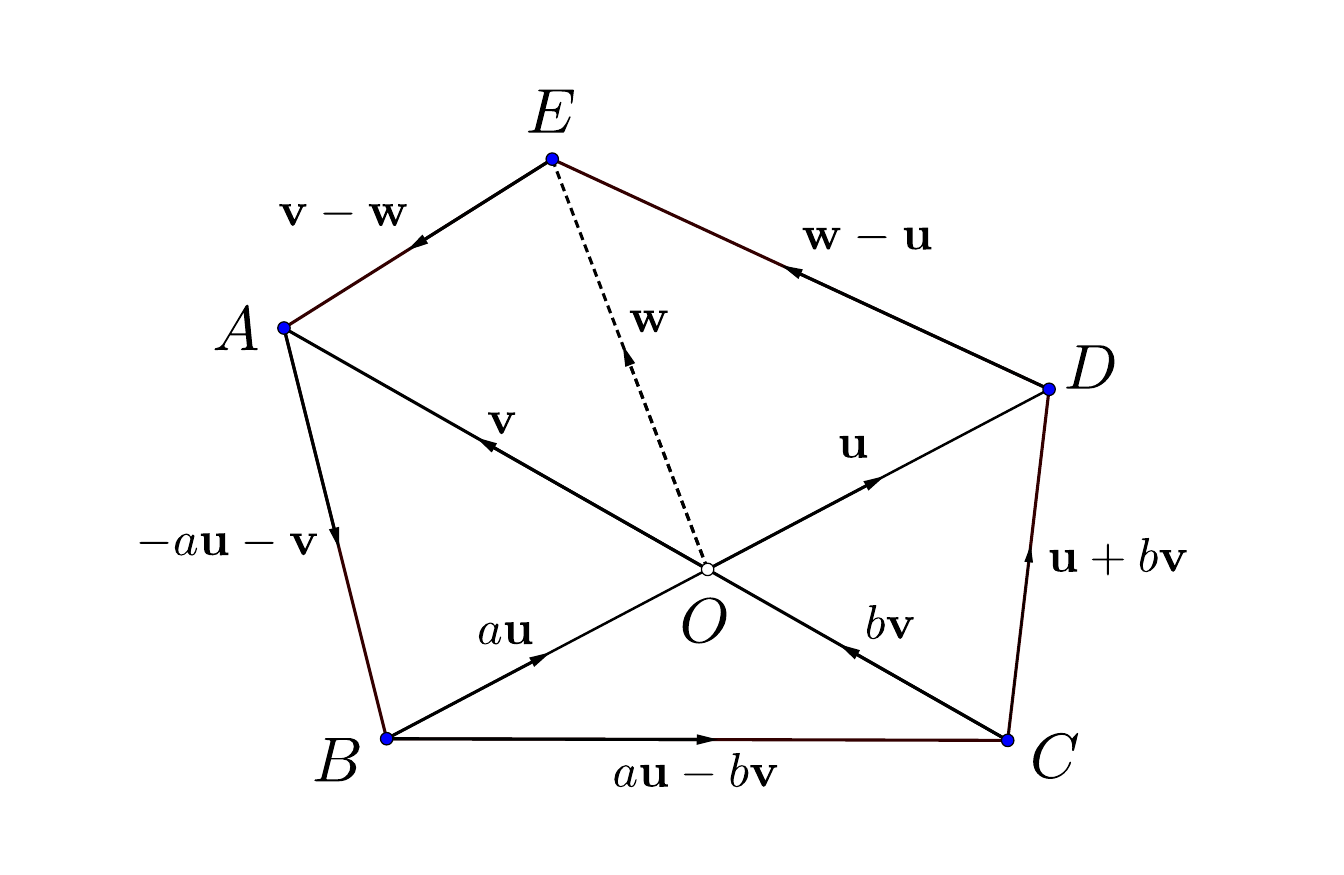}
\vspace{-0.5cm}
\caption{\small{Defining a convex pentagon}}
\label{fig2}
\end{figure}

\noindent Using the triangle rule, we obtain that
$\overrightarrow{AB} = -a\mathbf{u}-\mathbf{v}$,
$\overrightarrow{BC} = a\mathbf{u} - b\mathbf{v}$, and
$\overrightarrow{CD} = \mathbf{u}+b\mathbf{v} $.

Since any vector can be written as a linear combination of two given independent vectors,
let $\overrightarrow{OE} = \mathbf{w} := c \mathbf{u} + d \mathbf{v}$, with $c, d>0$.
It follows that $\overrightarrow{DE} = \mathbf{w} - \mathbf{u}$
and $\overrightarrow{AE} = \mathbf{v} - \mathbf{w}$. We have
\begin{eqnarray*}
|ODE| &=& \mathbf{u}\wedge \mathbf{w} =
\mathbf{u}\wedge (c\mathbf{u} + d\mathbf{v}) = c(\mathbf{u}\wedge \mathbf{u}) + d (\mathbf{u}\wedge\mathbf{v}) = d,\\
|OAE|&=& \mathbf{w}\wedge \mathbf{v} =
(c\mathbf{u} + d\mathbf{v})\wedge \mathbf{v} = c(\mathbf{u}\wedge \mathbf{v}) + d (\mathbf{v}\wedge \mathbf{v}) = c.
\end{eqnarray*}

After similar calculations, we can write the areas of various
triangles in pentagon $ABCDE$ in terms of the positive constants
$a,\,b,\,c,\,d$ as shown below:
\begin{equation*}
|OAB|= -\mathbf{v}\wedge a\mathbf{u} = a,\,|OBC| = -a\mathbf{u}\wedge -b\mathbf{v} =ab,\,|OCD| = -b\mathbf{v}\wedge (b\mathbf{v}+ \mathbf{u}) =b.
\end{equation*}

We can now express the total area of the pentagon in terms of the parameters $a, b, c$, and $d$.

$|ABCDE| = |OAB| + |OBC| + |OCD| +
|ODE| + |OEA|$, that is,
\begin{equation}\label{ABCDE}
|ABCDE| = a+b+c+d+ab.
\end{equation}

Next, we compute the areas of the ears of the pentagon.
\begin{eqnarray}\label{laterlemma}
|ABC| &=& \overrightarrow{AB} \wedge \overrightarrow{BC}= (-a\mathbf{u}-\mathbf{v})\wedge (a\mathbf{u} - b\mathbf{v}) = a + ab,\notag\\
|BCD| &=& \overrightarrow{BC} \wedge \overrightarrow{CD}= (a\mathbf{u}- b\mathbf{v})\wedge (b\mathbf{v} + \mathbf{u}) = b + ba,\notag\\
|CDE| &=& \overrightarrow{CD} \wedge \overrightarrow{DE}= (b\mathbf{v}+\mathbf{u})\wedge ((c-1)\mathbf{u} + d\mathbf{v}) = b + d - bc,\\
|DEA| &=& \overrightarrow{DE} \wedge \overrightarrow{EA}= ((c-1)\mathbf{u}+ d\mathbf{v})\wedge (-c\mathbf{u} + (1-d)\mathbf{v}) = c +d -1,\notag\\
|EAB| &=& \overrightarrow{EA} \wedge \overrightarrow{AB}= (-c\mathbf{u}+ (1-d)\mathbf{v})\wedge (-a\mathbf{u} - \mathbf{v}) = a + c  -ad.\notag
\end{eqnarray}
Translating assumption (\ref{ABC}) in terms of $a$, $b$, $c$, and $d$, we obtain
\begin{eqnarray*}
|DEA| &\leq& |ABC|\iff c + d - 1 \leq a + ab
\iff c + d \leq 1+a+ab,\\
|DEA| &\leq& |BCD|\iff c + d - 1 \leq b + ab
\iff c + d \leq 1+b+ab,\\
|DEA| &\leq& |CDE| \iff c+d-1 \leq b+d-
bc\iff c\leq 1,\\
|DEA| &\leq& |EAB| \iff c+d-1 \leq a + c -
ad\iff d\leq 1.
\end{eqnarray*}
We introduce two more notations
\begin{equation}\label{ef}
e:=1+a+ab-c-d \quad \text{and}\quad f:=1+b+ab-c-d.
\end{equation}

Then, the above inequalities can be summarized as
\begin{equation}\label{cdef}
1-c\ge 0, \,\,1-d\ge 0,\,\,e\ge 0,\,\,\text{and}\,\, f\ge 0.
\end{equation}

Let us assess the situation for a moment. At this point, we have relatively simple expressions for the
area of the pentagon $ABCDE$ and for the areas of its ears, in terms of the variables $a$, $b$, $c$ and $d$.
Notice that $a$, $b$, $c$, and $d$ are strictly positive but not completely independent as the conditions above reflect.

What are the values of $a$, $b$, $c$, and $d$ if $ABCDE$ is an affine regular pentagon? It is very easy to see that in this case $ODEA$ is a parallelogram, so necessarily $\mathbf{w}=\mathbf{u}+\mathbf{v}$, which means that $c=d=1$.
On the other hand, a simple trigonometry exercise shows that $a=b=\frac{1}{2}\sec(\pi/5)=(\sqrt{5}-1)/2$.
We record this observation for future reference.
\begin{obs}\label{extremum}
The pentagon $ABCDE$ is affine regular if and only if $a=b=\phi\overset{def}{=}(\sqrt{5}-1)/2$ and $c=d=1$.
\end{obs}

Next we construct a quadrilateral containing $ABCDE$ as follows: 

through vertex $E$ construct the parallel line to the diagonal $AD$. 
The extensions of sides $CD$ and $BA$ intersect this line at points $F$ and $G$ - see figure \ref{fig3}.
\begin{figure}[ht]
\centering
\includegraphics[width=1\linewidth]{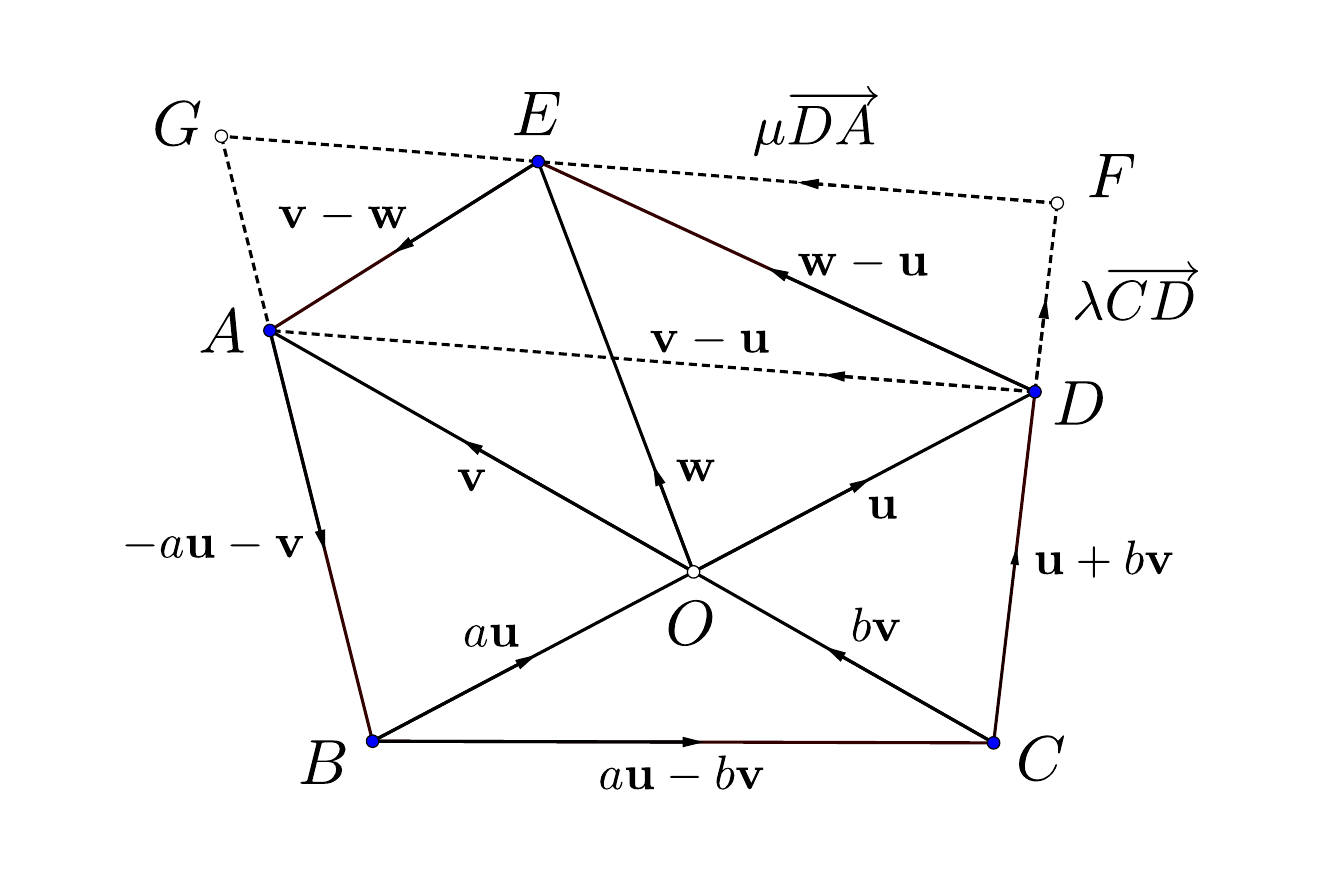}
\vspace{-0.5cm}
\caption{\small{A small area quadrilateral containing the pentagon}}
\label{fig3}
\end{figure}

To prove Theorem \ref{thmpentagon} it would suffice to prove that
\begin{equation}\label{wts}
\frac{|BCFG|}{|ABCDE|}\le \frac{3}{\sqrt{5}}.
\end{equation}
Let us express the areas of triangles $DFE$ and $EGA$ in terms of $a$, $b$, $c$, and $d$.

There exist positive constants $\lambda$ and $\mu$ such that
$\overrightarrow{DF}=\lambda\overrightarrow{CD}$ and $\overrightarrow{FE}=\mu\overrightarrow{DA}$.
Since $\overrightarrow{DF}+\overrightarrow{FE}=\overrightarrow{DE}$, it follows that
$\lambda\overrightarrow{CD}+\mu\overrightarrow{DA}=\overrightarrow{DE}$.

Taking outer product with $\overrightarrow{DA}$ on both sides, we obtain
\begin{align*}
&(\lambda\overrightarrow{CD}+\mu\overrightarrow{DA})\wedge \overrightarrow{DA}=\overrightarrow{DE} \wedge\overrightarrow{DA}
\implies \lambda(\overrightarrow{CD}\wedge\overrightarrow{DA})=\overrightarrow{DE}\wedge\overrightarrow{DA}\implies\\
&\implies \lambda|CDA|=|DEA|\implies \lambda=\frac{c+d-1}{1+b}.\quad \text{Hence}
\end{align*}
\begin{equation}\label{DFE}
|DFE|=\overrightarrow{DF}\wedge\overrightarrow{DE}=\lambda(\overrightarrow{CD}\wedge\overrightarrow{DE})=
\lambda|CDE|=\frac{(c+d-1)(b-bc+d)}{1+b}.
\end{equation}
Similar computations show that the area of $EGA$ can be written as
\begin{equation}\label{EGA}
|EGA|=\frac{(c+d-1)(a-ad+c)}{1+a}.
\end{equation}
Since we intend to show that inequality (\ref{wts}) holds it would suffice to prove that
\begin{equation}
|DFE|+|EGA|\le \left(\frac{3}{\sqrt{5}}-1\right)\cdot|ABCDE|.
\end{equation}
Using now (\ref{ABCDE}), (\ref{DFE}), and (\ref{EGA}), everything reduces to proving that
\begin{equation*}
\frac{(c+d-1)(a-ad+c)}{1+a}+\frac{(c+d-1)(b-bc+d)}{1+b}\le \left(\frac{3}{\sqrt{5}}-1\right)(a+b+c+d+ab).
\end{equation*}

After clearing the denominators, multiplying by $2$, and rearranging, the inequality to prove is equivalent to
\begin{align}\label{fnonneg}
g(a,b,c,d)&:=(6\sqrt{5} - 10)(a^2\,b^2 + 2\,a^2\,b + 2\,a\,b^2 + a^2 + b^2) + 20\,a\,b\,c\,d + \notag\\
&+10\,a\,b\,(c^2 + d^2)+ (18\sqrt{5} - 10)\,a\,b + 6\sqrt{5}\,(a + b + c + d) - \notag\\
&-(20 - 6\sqrt{5})(a + b)(c + d) -(40 - 6\sqrt{5})\,a\,b\,(c + d) - 10\,(c + d)^2 \ge 0.
\end{align}

Recall that $a$, $b$, $c$ and $d$ are positive; in addition, we have the side constraints (\ref{cdef}). Proving
inequality (\ref{fnonneg}) via the standard techniques is a very challenging task. Instead, we use an alternate approach. We know that $g(a,b,c,d)=0$ if $a=b=\phi=(\sqrt{5}-1)/2, c=d=1$, and we want to show that this is the
only case when equality happens.

The main idea is simple: \emph{try to write $g(a,b,c,d)$ as a sum of nonnegative terms}.

More specifically, we attempt to express $g(a,b,c,d)$ as
\begin{equation}\label{pos}
g(a,b,c,d)=\sum_J p_J\,a^{i_1}b^{i_2}c^{i_3}d^{i_4}e^{i_5}f^{i_6}(1-c)^{i_7}(1-d)^{i_8}(a-\phi)^{2i_9}(b-\phi)^{2i_{10}},
\end{equation}
with all coefficients $p_J\ge 0$ where $J=\{i_1,i_2,i_3,i_4,i_5,i_6,i_7,i_8,i_9, i_{10}\}$.

The exponents $i_k$ are nonnegative integers for all $1\le k \le 10$.  Recall that $a, b, c, d$ are positive while $1-c, 1-d, e$, and $f$ are nonnegative by \eqref{cdef}.

The problem reduces to solving a linear system whose equations are obtained by matching the coefficients of the terms $a^ib^jc^kd^l$ from
(\ref{fnonneg}) to those in (\ref{pos}).

We want this system to have nonnegative solutions. We used MAPLE to find such a solution. It turns out
that $g(a,b,c,d)$ can be rewritten as
\begin{align*}
&\phantom{+i}(60 - 26\sqrt{5})\,(c\,f + d\,e) + (21 - 7\sqrt{5})\,(a\,f + b\,e) + (16\sqrt{5} - 30)\,c\,d\,(e + f) + \\
&+(80 - 32\sqrt{5})\,a\,b\,(1 - c)\,(1 - d) + (3\sqrt{5} - 5)\,(b^2\,(a - \phi)^2 + a^2\,(b - \phi)^2) + \\
&+(5 - \sqrt{5})\,(b^2(1 - c)^2 + a^2(1 - d)^2) + (10 - 2\sqrt{5})\,(b^2c\,(1 - c) + a^2d\,(1 - d)) +\\
&+ 10\,a\,b((1 - c)^2 +(1 - d)^2) +(48 - 20\sqrt{5})\,(1-c)(1-d)(a\,(1 - c)+b\,(1 - d)) \\
&+ (10\sqrt{5} - 20)\,(d\,(1 - c)^2 + c\,(1 - d)^2) + (5 - \sqrt{5})\,(d^2\,(a - \phi)^2 + c^2\,(b - \phi)^2) + \\
&+(11\sqrt{5} - 21)\,(b\,(a - \phi)^2 + a\,(b - \phi)^2) + (6\sqrt{5} - 10)\,(b\,(1 - c)^2 + a\,(1 - d)^2) +\\
&+(97 - 41\sqrt{5})\,(1 - c)\,(1 - d)\,(a\,c+b\,d) + (17\sqrt{5} - 31)\,(b\,c\,(1 - d) + a\,d\,(1 - c))+\\
&+(49 - 21\sqrt{5})\,(b\,d^2\,(1 - c) + a\,c^2\,(1 - d)) +(22\sqrt{5} - 40)\,(1 - c)\,(1 - d)\,(c+d).
\end{align*}
It is straightforward to check that all terms of the above sum are nonnegative, and that $g(a,b,c,d)=0$ only when $a=b=\phi$ and $c=d=1$, as desired.
The proof of Theorem \ref{thmpentagon} is complete.
\end{proof}

\section{\bf A small area pentagon containing a hexagon}
\begin{thm}\label{thmhexagon}
Every convex hexagon $ABCDEF$ is contained in a pentagon $BCDGH$ such that $|BCDGH| \le 7/6\cdot |ABCDEF|$.
\end{thm}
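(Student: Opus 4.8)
The plan is to follow the proof of Theorem \ref{thmpentagon} step by step, the one essential difference being that a convex hexagon carries two more affine degrees of freedom than a pentagon, so the polynomial inequality at the end will live in six variables rather than four. First I would parametrize the hexagon with the outer product introduced earlier. Fixing the diagonal $EA$ --- the base of the ear $EFA$ at the vertex $F$ we intend to cut --- as a reference, I would place the intersection of two suitable diagonals at the origin $O$, choose a basis $\mathbf{u},\mathbf{v}$ adapted to that configuration, normalize one triangle area to $1$ in order to absorb scaling, and then express the remaining vertices through six positive parameters. Exactly as in the pentagon case, the total area $|ABCDEF|$ and the areas of all six ears become low-degree polynomials in those parameters.

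Next, mirroring assumption (\ref{ABC}), I would relabel the vertices so that the ear $EFA$ at the intended cut vertex has minimal area among the six ears; translating this into the parameters yields a collection of linear side constraints (the analogues of $1-c\ge 0$, $1-d\ge 0$, $e\ge 0$, $f\ge 0$) alongside the positivity of the parameters themselves, and these constraints are exactly what makes a positivity certificate feasible. I would then build the circumscribing pentagon $BCDGH$ in the manner sanctioned by Lemma \ref{depano}: extend the sides $DE$ and $BA$ beyond $E$ and $A$, and take for the fifth side the line through $F$ parallel to $EA$, meeting those two extensions at $G$ and $H$. One must check that this produces a genuine convex pentagon containing the hexagon, which should follow from the minimal-ear hypothesis as in the pentagon argument. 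The excess area is then the sum of two corner triangles, $|EGF|+|FHA|$, each computed as a ratio of ear areas in precisely the way $|DFE|=\lambda\,|CDE|$ was obtained in (\ref{DFE}).

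Substituting these expressions, the target $|EGF|+|FHA|\le \tfrac{1}{6}\,|ABCDEF|$ reduces, after clearing denominators and rearranging, to a single polynomial inequality $g\ge 0$ in the six parameters subject to the side constraints, with the affine regular hexagon as the conjectured equality locus. Finally I would establish $g\ge 0$ by the same device as in (\ref{pos}): search, with computer algebra, for a representation of $g$ as a sum of terms $p_J$ times monomials in the parameters, products of the constraint expressions, and squares that vanish at the extremal configuration, all with $p_J\ge 0$, and then verify that equality forces the affine regular hexagon.

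The main obstacle I anticipate is precisely this last certificate. Passing from four to six variables enlarges the pool of candidate monomials dramatically, so the linear system obtained by matching coefficients is far larger and a nonnegative solution is correspondingly harder to locate; moreover one must first pin down the correct squared building blocks vanishing at the affine regular hexagon, since the decomposition can only succeed if its equality set is exactly that single configuration. Producing a clean, independently checkable certificate --- rather than a merely numerical one --- is where the real effort lies, whereas the geometric setup and the reduction to $g\ge 0$ are routine adaptations of the pentagon proof.
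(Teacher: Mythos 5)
Your overall frame (outer-product parametrization, a minimality normalization, cutting one vertex by a line parallel to the diagonal joining its neighbors, and reducing to a polynomial inequality) matches the paper's setup, but your plan has a genuine gap at exactly the step you flag: you never produce the six-variable positivity certificate, and --- crucially --- the paper does not produce one either, because it proves the inequality by a different, dimension-reducing mechanism that your outline lacks. In the paper, the hexagon is parametrized via the central triangle $MNP$ of the three long diagonals $AD$, $BE$, $CF$ (normalized so $|MNP|=1$, with a perturbation/continuity argument when the diagonals are concurrent), and the normalization is \emph{not} ``minimal ear'' but the single condition $a=\min\{a,b,c,d,e,f\}$ on the parameters; the cut is made at the vertex $A$ carrying the minimal parameter, parallel to $FB$. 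The two corner triangles are expressed as ratios of ear areas, $|AHB|=|FAB|\cdot|ABC|/|FBC|$ and $|FGA|=|EFA|\cdot|FAB|/|EFB|$, and the key observation is that $|AHB|$ is decreasing in $c$ and $|FGA|$ is decreasing in $e$ (each partial derivative is minus a perfect square over a positive quantity), so one may set $c=e=a$. After also discarding positive terms of $|ABCDEF|$ to get the crude bound $|ABCDEF|>4a+b+f+2a^2+2ab+2af$ and substituting $b=a+x$, $f=a+y$ with $x,y\ge 0$, the difference $|ABCDEF|-6|AHB|-6|FGA|$ clears to a polynomial in $a,x,y$ all of whose coefficients are visibly nonnegative. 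No certificate search occurs; indeed the paper's concluding remarks explicitly caution that the pentagon-style approach is unlikely to scale with the number of parameters, which is precisely the difficulty your plan defers.

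Two further details of your outline are off. First, the side constraints coming from a minimal-ear assumption would not be linear: the hexagon's ear areas (e.g. $|EFA|=f(1+e+a)-ea$) are quadratic in the six parameters, so your analogues of $1-c\ge 0$, $1-d\ge 0$ are quadratic relations, inflating the certificate search well beyond the pentagon case; the paper's linear condition $a\le b,\dots,f$ is what keeps the problem tractable. Second, with your normalization it is not established that cutting at the minimal-ear vertex yields $|EGF|+|FHA|\le\tfrac{1}{6}|ABCDEF|$ at all --- the feasibility of a certificate with equality locus exactly the affine regular hexagon is plausible but unverified (note also that for the regular hexagon infinitely many optimal circumscribing pentagons exist, so the equality structure is more degenerate than in the pentagon case). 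As written, your proposal is a research plan whose decisive step is an unexhibited computation, whereas the paper's proof replaces that step with the monotonicity reduction in $c$ and $e$ and a three-variable polynomial with manifestly nonnegative coefficients.
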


\begin{proof}
Let $ABCDEF$ be an arbitrary convex hexagon. Suppose that the long
diagonals, $AD$, $BE$, and $CF$ are not concurrent. If these
diagonals do have a common point, then perturb the position of one
of the vertices by an arbitrarily small amount so that the diagonals
are not concurrent anymore. By continuity, any inequality which is
valid in the latter case is also valid in the former.
Let $M=AD\cap BE,\,N=AD\cap CF,\,$ and $P=CF\cap BE$. Denote
$\mathbf{u}=\overrightarrow{MN},
\mathbf{v}=\overrightarrow{MP}$ as shown in figure \ref{fig4}.

It follows that
$\overrightarrow{NP}=\mathbf{w}:=\mathbf{v}-\mathbf{u}$.
After an appropriate scaling we may assume that $|MNP|=
\mathbf{u}\wedge\mathbf{v} = \mathbf{u}\wedge\mathbf{w} = \mathbf{v}\wedge\mathbf{w} =1$.

Since $A,\,M,\,N,\,D$ are collinear, there exist positive scalars $a$ and $d$ such that $\overrightarrow{AM}=a\mathbf{u},
\overrightarrow{ND}=d\mathbf{u}$.

Similarly, there exist positive scalars $b,\,c,\,e,\,f$ so that
$\overrightarrow{BM}=b\mathbf{v},
\overrightarrow{CN}=c\mathbf{w},
\overrightarrow{PE}=e\mathbf{v},
\overrightarrow{PF}=f\mathbf{w}$. Without loss of generality we may assume that
\begin{equation}\label{assumeamin}
a=\min \{a,\,b,\,c,\,d,\,e,\,f\}.
\end{equation}
\begin{figure}[ht]
\centering
\includegraphics[width=0.9\linewidth]{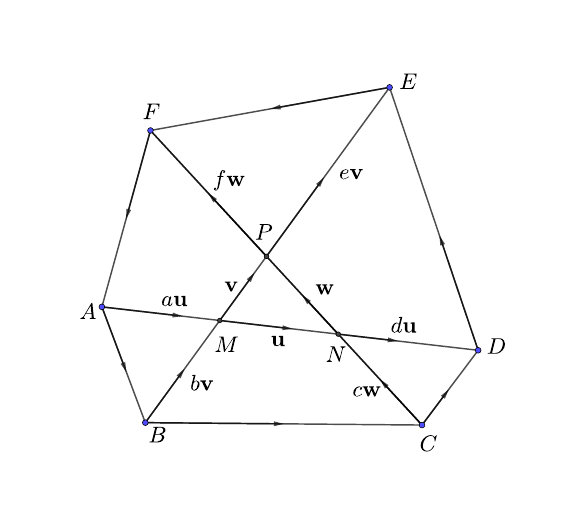}
\vspace{-0.5cm}
\caption{\small{Defining a convex hexagon}}
\label{fig4}
\end{figure}
Using outer products, we obtain the following area formulas.
\begin{align*}
|ANF|&=(a+1)\mathbf{u}\wedge (f+1)\mathbf{w}=(a+1)(f+1).\\
|BPC|&=(b+1)\mathbf{v}\wedge
(c+1)(\mathbf{v}-\mathbf{u})=(b+1)(c+1).\\
|DME|&=(d+1)\mathbf{u}\wedge (e+1)(\mathbf{v})=(d+1)(e+1).\\
|AMB|&= a\mathbf{u}\wedge b\mathbf{v}=ab.\\
|CND|&= d\mathbf{u}\wedge c\mathbf{w}=cd.\\
|EPF|&= e\mathbf{v}\wedge f\mathbf{w}=ef.
\end{align*}
We are now in position to compute the area of the hexagon $ABCDEF$.
\begin{align*}
|ABCDEF|&=|ANF|+|BPC|+|DME|+|AMB|+\\
&+|CND|+|EPF|-2|MNP|,\quad {\text{which implies
that}}
\end{align*}
\begin{equation}\label{hex}
|ABCDEF|=1+a+b+c+d+e+f+ab+bc+cd+de+ef+fa.
\end{equation}
Since $c\ge a, d\ge a$, and $e\ge a$ it follows that
\begin{equation}\label{boundhex}
|ABCDEF|> 4a+b+f+2a^2+2ab+2af.
\end{equation}
We express the areas of triangles $EFA$, $FAB$, $ABC$, $FBC$, and $EFB$.
\begin{align}\label{fivetriangles}
|EFA|&=\overrightarrow{AF}\wedge \overrightarrow{EF}=(a\mathbf{u}+\mathbf{v}+f\mathbf{w})\wedge (-e\mathbf{v}+f\mathbf{w})=f(1+e+a)-ea,\notag\\
|FAB|&=\overrightarrow{AB}\wedge \overrightarrow{AF}=(a\mathbf{u}-b\mathbf{v})\wedge (a\mathbf{u}+\mathbf{v}+\mathbf{w})=a(1+f+b)-fb,\notag\\
|ABC|&=\overrightarrow{AB}\wedge \overrightarrow{BC}=(a\mathbf{u}-b\mathbf{v})\wedge (\mathbf{u}+b\mathbf{v}-c\mathbf{w})=b(1+a+c)-ac,\\
|FBC|&=\overrightarrow{BC}\wedge \overrightarrow{CF}=(\mathbf{u}+b\mathbf{v}-c\mathbf{w})\wedge (1+c+f)\mathbf{w}=(1+b)(1+c+f),\notag\\
|EFB|&=\overrightarrow{BE}\wedge \overrightarrow{EF}=(1+b+e)\mathbf{v}\wedge (-e\mathbf{v}+f\mathbf{w})=f(1+b+e).\notag
\end{align}

Next we construct a pentagon which contains the hexagon $ABCDEF$ as follows: through vertex $A$ construct the parallel line to the diagonal $FB$. The extensions of sides $EF$ and $CB$ intersect this line at points $G$ and $H$ as shown in figure \ref{fig4}.

\begin{figure}[ht]
\centering
\includegraphics[width=0.9\linewidth]{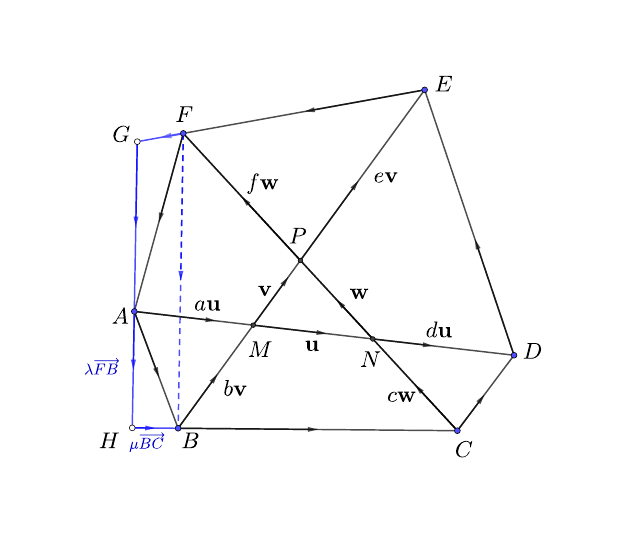}
\vspace{-0.5cm}
\caption{\small{A small area pentagon containing a hexagon}}
\label{fig5}
\end{figure}

To prove Theorem \ref{thmhexagon} it would suffice to show that
\begin{equation}\label{wtshex}
\frac{|BCDGH|}{|ABCDEF|}\le \frac{7}{6}, \quad \text{or equivalently}\quad |ABCDEF|\ge 6|AHB|+6|FGA|.
\end{equation}
We write the areas of triangles $AHB$ and $FGA$ in terms of $a, b, c, d, e$, and $f$.

There exist positive constants $\lambda$ and $\mu$ such that
$\overrightarrow{AH}=\lambda\overrightarrow{FB}$ and $\overrightarrow{HB}=\mu\overrightarrow{BC}$.
Since $\overrightarrow{AH}+\overrightarrow{HB}=\overrightarrow{AB}$, it follows that
$\lambda\overrightarrow{FB}+\mu\overrightarrow{BC}=\overrightarrow{AB}$.

Taking outer product with $\overrightarrow{BC}$ on both sides, we obtain
\begin{align*}
&(\lambda\overrightarrow{FB}+\mu\overrightarrow{BC})\wedge \overrightarrow{BC}=\overrightarrow{AB} \wedge\overrightarrow{BC}
\implies \lambda(\overrightarrow{FB}\wedge\overrightarrow{BC})=\overrightarrow{AB}\wedge\overrightarrow{BC}\implies\\
&\implies \lambda|FBC|=|ABC|\implies \lambda=\frac{|ABC|}{|FBC|}.
\end{align*}

It follows that
\begin{equation}\label{AHB}
|AHB|=\overrightarrow{AH}\wedge\overrightarrow{AB}=\lambda(\overrightarrow{FB}\wedge\overrightarrow{AB})=
\lambda|FAB|=\frac{|FAB|\cdot|ABC|}{|FBC|}.
\end{equation}
Similar computations show that the area of $FGA$ can be written as
\begin{equation}\label{FGA}
|FGA|=\frac{|EFA|\cdot|FAB|}{|EFB|}.
\end{equation}
Using \eqref{fivetriangles} and \eqref{AHB} it follows that
\begin{equation*}
|AHB|=\frac{(a+af+ab-fb)(b+ab+bc-ac)}{(1+b)(1+c+f)}.
\end{equation*}
Differentiating with respect to $c$ we obtain
\begin{equation*}
\frac{\partial |AHB|}{\partial c}=-\frac{(a+af+ab-bf)^2}{(1+b)(1+c+f)^2}\le 0,
\end{equation*}
which implies that $|AHB|$ is decreasing as a function of $c$. Since $c\ge a$ by \eqref{assumeamin} it follows that
\begin{equation}\label{ineqAHB}
|AHB|\le\frac{(a+af+ab-fb)(b+2ab-a^2)}{(1+b)(1+a+f)}.
\end{equation}
Likewise, using \eqref{fivetriangles} and \eqref{FGA} it follows that
\begin{equation*}
|FGA|=\frac{(a+af+ab-fb)(f+ef+af-ae)}{f(1+b+e)}.
\end{equation*}
Differentiating with respect to $e$ we obtain
\begin{equation*}
\frac{\partial |FGA|}{\partial e}=-\frac{(a+af+ab-bf)^2}{b(1+b+e)^2}\le 0,
\end{equation*}
which implies that $|FGA|$ is decreasing as a function of $e$. Since $e\ge a$ by \eqref{assumeamin} it follows that
\begin{equation}\label{ineqFGA}
|FGA|\le\frac{(a+af+ab-fb)(f+2af-a^2)}{f(1+a+b)}.
\end{equation}
Combining now inequalities \eqref{boundhex}, \eqref{ineqAHB} and \eqref{FGA} it follows that
\begin{align}\label{final}
|ABCDEF|-&6|AHB|-6|FGA|> 4a+b+f+2a^2+2ab+2af-\\ &-6(a+ab+af-bf)\cdot\left(\frac{b+2ab-a^2}{(1+b)(1+a+f)}+\frac{f+2af-a^2}{f(1+a+b)}\right).
\end{align}

From \eqref{assumeamin} it follows that $b=a+x$ and $f=a+y$ for some $x, y\ge 0$. After clearing the denominator, the right hand side of \eqref{final} becomes
\begin{align*}
&(14a + 7)(x^3y^2+x^2y^3) + (18a^2 + 11a + 1)x^3y + (18a^2 + 25a + 8)xy^3 +\\
+&(78a^2 + 78a + 15)x^2y^2+(4a^3 + 4a^2 + a)x^3 +(4a^3 + 8a^2 + 5a + 1)y^3+\\
+&(70a^3 + 85a^2 + 26a + 2)x^2y+(70a^3 + 125a^2 + 66a + 9)xy^2+\\
+&(12a^4 + 20a^3 + 11a^2 + 2a)x^2 +(12a^4 + 28a^3 + 23a^2 + 8a + 1)y^2+\\
+&(42a^4 + 84a^3 + 58a^2 + 16a + 1)xy+(2a^5 + 8a^4 + 12a^3 + 7a^2 + a)x+\\
&+(2a^5+ 2a^4 + a^2 + a)y.
\end{align*}
Since this is obviously nonnegative, inequality \eqref{final} follows. This concludes the proof of Theorem \ref{thmhexagon}.
\end{proof}

\section{\bf Concluding remarks and Open questions}
In this paper we proved that every unit area convex quadrilateral is contained in a quadrilateral of area $\le 3/\sqrt{5}$ and that every unit area convex hexagon is contained in a pentagon of area no greater than $7/6$. Both results are optimal. The key idea was to encode a convex pentagon/hexagon, in terms of four/six parameters, respectively. It is doubtful that this approach can be generalized to polygons with arbitrarily many vertices. On one hand, for each additional vertex the number of parameters increases by two, thus presumably leading to more complicated calculations. On the other hand, the line arrangement determined by the long diagonals of the polygon may not longer be unique; this in turn would require the analysis of several different cases. New ideas seem to be needed.

\end{document}